\numberwithin{equation}{section}
\theoremstyle{plain}
\newtheorem*{theorem*}{Theorem}
\newtheorem*{lemma*}{Lemma}
\newtheorem{theorem}{Theorem}
\newtheorem{lemma}{Lemma}[section]
\newtheorem{corollary}[lemma]{Corollary}
\newtheorem{proposition}[lemma]{Proposition}
\theoremstyle{definition}
\newtheorem{remark}[lemma]{Remark}
\newtheorem*{remark*}{Remark}
\newtheorem*{example*}{Example}
\newtheorem*{er*}{Examples and Remarks}
\begin{document}

\title{On the Loss and Propagation of Modulus of Continuity for the Two-Dimensional Incompressible Euler Equations }
\author{Karim R. Shikh Khalil}
\maketitle
\abstract{
It is known from the work of Koch that the two-dimensional incompressible Euler equations propagate Dini modulus of continuity for the vorticity. In this work, we consider the two-dimensional Euler equations with a modulus of continuity for vorticity rougher than Dini continuous. We first show that the two-dimensional Euler equations propagate an explicit family of moduli of continuity for the vorticity that are rougher than Dini continuity. The main goal of this work is to address the following question: Given a modulus of continuity for the 2D Euler equations, can we always propagate it? The answer to this question is \textit{No}. We construct a family of moduli of continuity for the 2D Euler equations that are \textit{not} propagated.

}

\section{Introduction}

The Euler equations
\begin{equation}\label{Euler}
\begin{split}
\partial_t u &+  u\cdot\nabla u + \nabla p = 0 \\ 
&\nabla \cdot u = 0 \\  
\end{split}
\end{equation}
describe the motion of perfect incompressible fluids, where $u$ is the velocity field and $p$ is the pressure field that enforces the incompressibility condition. The challenge of studying the Euler equations arises from the fact that the equations are nonlinear and nonlocal.

 To study the Euler equations, it is useful to introduce the vorticity \(\omega := \nabla^\perp \cdot u\). The 2D Euler equations can then be written as follows:
\begin{equation}\label{EulerVorticty}
\begin{split}
\partial_t \omega &+  u\cdot\nabla \omega= 0, \\ 
&\nabla \cdot u=0 \\  
&u=\nabla^\perp \Delta^{-1} \omega.\\
\end{split}
\end{equation}

In this work, we are interested in studying the propagation of the modulus of continuity of the vorticity for the two-dimensional Euler equations. It is known from the work of Yudovich \cite{Y1} that the 2D Euler equations are globally well-posed for bounded vorticity with sufficient decay, allowing one to study the long-time behavior of solutions. The 2D Euler dynamics depend significantly on the choice of space in which the dynamics are studied. Depending on the choice of space, it could be too fine to capture large-scale structures, leading to instabilities. For instance, when considering the 2D Euler dynamics in H\"older  $C^{\alpha}$ space, the $C^{\alpha}$ norm of vorticity has a double exponential upper bound. In fact, it is known from the work of Kiselev-\v{S}ver\'ak  \cite{KS} that the gradient of vorticity grows double exponentially on the disk domain. See the following works for recent progress on the infinite time growth of the gradient of vorticity: \cite{CJ, D, D2, DE, DEJ, N, K, KS, Y3, Y4, Z}.

 In order to try to capture stable behavior, one can study the 2D Euler equations in a rougher but still continuous function spaces. In the work of Koch \cite{K}, the author considers (among other important results) vorticity with Dini modulus of continuity. Namely,  a modulus of continuity  $\Omega(\rho)$ that satisfies the following:   
  $$
 \int_0^{\frac{1}{2}} \frac{\Omega(\rho)}{\rho}  d \rho < \infty.
 $$ 

In \cite{K}, Koch proves that the Dini modulus of continuity for the 2D Euler equations can be propagated with exponential bounds, and thus exhibits slower growth compared to the H\"older $C^{\alpha}$ space, where the $C^{\alpha}$  norm has a double exponential bound. Examples of Dini continuous functions include \(\Omega(\rho) = |\log(\rho)|^{-\gamma}\), for \(\gamma > 1\).

In this work, we study the 2D Euler equations with a rougher modulus of continuity than Dini continuous. We observe that if we consider an explicit modulus of continuity of the form $\Omega(\rho) = |\log(\rho)|^{-\gamma}$ for $0 < \gamma < 1$, it is not difficult to see that we can actually propagate it in time (see Section \ref{PropEx}). We observe that these explicit moduli of continuity grow with an exponential bound of the form $e^{\gamma t}$. In other words, the rougher these moduli of continuity become (smaller $\gamma$), the slower the growth.

The main goal of this work is to address the following question: Given a modulus of continuity for the 2D Euler equations, can we always propagate it? The answer to this question is \textit{No}. We construct a family of moduli of continuity for the 2D Euler equations that are \textit{not} propagated.

Finally, we should remark that in this work, we are considering continuous solutions and we are interested in the propagation of the modulus of continuity for vorticity. In other words, the solutions we consider are bounded. For unbounded vorticity, there has been some recent work on the propagation of different structures, see \cite{CCS, DEL, EMR, L}. Lastly, we remark that the literature on the well/ill-posedness of 2D Euler equations is extensive, and we do not attempt to be exhaustive here. For well-posedness, see the following  works \cite{Li, G, W, H, EbM, Ka, BKM}. For ill-posedness and loss of regularity, see the following  works \cite{BL1, BL2, CMO, EM, EJ1, J}.

\subsection{Statement of the main result}

\begin{theorem}
There exist a family of solutions to the 2D Euler equation for which the modulus of continuity for the vorticity is lost. Namely, there exist solutions to 2D Euler equations with initial  modulus of continuity $\Omega_0(\rho)$ for the vorticity such that for $t>0$, we have

$$\frac{\Omega(t,\rho)}{\Omega_0(\rho)} \geq \frac{1}{2} \log(|\log(\rho)|)    $$

for $\rho>0$ arbitrarily small. 

\end{theorem}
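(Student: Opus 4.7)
The plan is to construct initial vorticity combining a \emph{background component} that drives a Kiselev--\v{S}ver\'ak type hyperbolic flow near the origin with a multi-scale family of \emph{test bumps} designed to probe the modulus of continuity along the coordinate axes. I would impose odd-odd symmetry (odd in both $x_1$ and $x_2$), which keeps the axes invariant and forces the origin to be a stagnation point. Loss of modulus of continuity will then be detected by comparing $\omega(t,(\rho,0))$ and $\omega(t,(0,\rho))$, whose backward flow-map pre-images live at radically different scales.

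First I would arrange the background so that the Biot--Savart law yields the standard estimate $u_1(x_1,0,t) \approx -C x_1 \log(1/x_1)$ on the positive $x$-axis (with the opposite sign on the $y$-axis), which requires placing sufficient mass of $|\omega|$ in the favorable sector away from the origin. Integrating the scalar ODE $\dot{x}_1 \approx -Cx_1 \log(1/x_1)$ shows that the flow map on the $x$-axis satisfies $\log(1/\Phi_1(t,r)) \sim e^{ct}\log(1/r)$; equivalently, a point now at $(\rho,0)$ has backward pre-image at $(R,0)$ with $\log(1/R) \sim e^{-ct}\log(1/\rho)$, which is vastly larger than $\rho$ for small $\rho$. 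Next I would superpose low-amplitude odd-odd bumps $\varphi_n$ supported near dyadic reference scales $r_n = 2^{-n}$, with heights calibrated so that $|\omega_0(r_n,0) - \omega_0(0,r_n)| \asymp \Omega_0(r_n)$ along a subsequence. The Lagrangian identity $\omega(t,\Phi(t,x)) = \omega_0(x)$ then gives
\[
|\omega(t,(\rho,0)) - \omega(t,(0,\rho))| \;\gtrsim\; \Omega_0(R),
\]
so $\Omega(t,\rho)/\Omega_0(\rho) \geq \Omega_0(R)/\Omega_0(\rho)$. Choosing $\Omega_0$ appropriately and selecting $\rho$ from the subsequence of reference scales forces this ratio to exceed $\tfrac{1}{2}\log|\log\rho|$ at arbitrarily small $\rho$, which is exactly what the theorem demands.

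The main obstacle I anticipate is the simultaneous management of two competing demands: the test bumps must be sparse and of small enough amplitude not to disturb the leading-order hyperbolic estimate for $u_1$ uniformly on the relevant time interval, yet numerous enough across dyadic scales to detect the loss at arbitrarily small $\rho$. This is a delicate perturbative analysis of the Biot--Savart singular integral, and maintaining the estimate $u_1 \approx -Cx_1\log(1/x_1)$ in the presence of the multi-scale perturbation is the core technical point. A secondary technical step is the propagation of the odd-odd symmetry, which follows from the symmetry of the Biot--Savart kernel and ensures that the axes remain flow-invariant so that the ODE comparison above is available at all times.
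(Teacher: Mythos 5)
Your framework (odd--odd symmetry, hyperbolic flow at the origin \`a la Kiselev--\v{S}ver\'ak, Lagrangian transport to compare pre-images) matches the paper's general strategy, but there are three gaps, two of which are fatal.

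\textbf{Wrong test points.} Odd--odd symmetry forces $\omega(t,x_1,0)=\omega(t,0,x_2)=0$ for all $t$, so your comparison quantity $|\omega(t,(\rho,0))-\omega(t,(0,\rho))|$ is identically zero: both points lie on the axes. One of the two points must be taken off-axis. The paper takes $x=(\rho,\rho^c)$ in polar coordinates and $y=(\rho,0)$, so that $\omega(t,y)=0$ but $\omega(t,x)\neq 0$.

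\textbf{The modulus itself is the missing ingredient.} You write ``Choosing $\Omega_0$ appropriately \dots forces this ratio to exceed $\tfrac12\log|\log\rho|$,'' but that choice is precisely the hard part, and a natural candidate such as $\Omega_0(\rho)=|\log\rho|^{-\gamma}$ does \emph{not} work: the paper's Section~3 shows that this modulus \emph{is} propagated. Even if the backward pre-image satisfied the strongest possible bound $\log(1/R)\sim e^{-ct}\log(1/\rho)$, one would get $\Omega_0(R)/\Omega_0(\rho)=e^{c\gamma t}$, bounded uniformly in $\rho$ --- no loss, just exponential growth of the constant. The paper's central contribution (Proposition~\ref{Const} and Corollary~\ref{ScaledConst}) is the construction of a concave, monotone $G$ that alternates between $|\log\rho|^{-\gamma}$ and $|\log\rho|^{-\gamma}\log|\log\rho|$ along a sequence of scales $x_n\to 0$, arranged so that $\rho$ sits at the bottom of an oscillation exactly when its flowed-out image sits at the top, producing the $\log|\log\rho|$ divergence. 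Without such a construction the argument does not close.

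\textbf{Flow strength.} The estimate $u_1\approx -Cx_1\log(1/x_1)$ with an $O(1)$ constant, and the consequent double-exponential $\log(1/\Phi_1)\sim e^{ct}\log(1/r)$, would require $|\omega|\gtrsim 1$ in a sector reaching all the way to the origin --- incompatible with a vorticity that vanishes continuously at $0$ with modulus $\Omega_0$. Since $g(r)\to 0$, the Key Lemma gives only $I^s(r)\gtrsim g(r_0)|\log r_0|\sim|\log r_0|^{1-\gamma}$, hence the weaker bound $\Phi_r(t)\lesssim r_0 e^{-cg(r_0)|\log r_0|t}$. This is exactly why the constructed $G$ must be tuned to the displacement $r_0\mapsto r_0\,g(r_0)|\log r_0|$ rather than to a dyadic or double-exponential scaling.

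A secondary remark: your background-plus-test-bumps decomposition introduces an extra problem (controlling the modulus of continuity of the sum, and preventing the bumps from disturbing the Biot--Savart leading order) that the paper sidesteps by taking a single separable $\omega_0=g(r)h(\theta)$; the same $g$ both drives the flow and realizes the modulus.
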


\subsection{A short sketch of the proof}\label{sketch}
The idea of the proof is as follows: Due to Koch's result \cite{K}, we know that the modulus of continuity needs to be rougher than Dini continuous. Additionally, we know that if we considered a modulus of continuity of the following form: \(\Omega(\rho) = |\log(\rho)|^{-\gamma}\), for \(0 < \gamma < 1\), it will be propagated (see Section \ref{PropEx}). The idea is to construct a modulus of continuity that alternates, over specific spatial scales, between the following two moduli of continuity:
\[ 
\log(|\log(\rho)|) \cdot |\log(\rho)|^{-\gamma} \quad \text{and} \quad |\log(\rho)|^{-\gamma},
\]
while preserving the concavity and monotonicity of the modulus of continuity.
 Here we will give a rough sketch of why we consider such a construction. Given an appropriately chosen initial modulus of continuity \(\Omega_0(\cdot)\), the radial trajectory of the flow map for the 2D Euler equations heuristically follows the following ODE (see Section \ref{Loss} for the rigorous argument/justification):

$$
\frac{dx}{dt}\approx x \, \Omega_0(x_0) |\log(x_0)| \quad \text{where} \quad x|_{t=0}=x_0.
$$

Thus,  if we consider a modulus of continuity such that $$\Omega_0(x_0)\approx|\log(x_0)|^{-\gamma} \quad \text{for} \quad 0<\gamma<1.$$   Then,

$$
\frac{dx}{dt} \approx x  |\log(x_0)|^{1-\gamma}. 
$$

Now because the vorticity is transported,  modulus of continuity $\Omega$ at $t=O(1)$ will take the following form:

$$\Omega(t)\approx \Omega( x_0  |\log(x_0)|^{1-\gamma} ).$$

 We observe now that  if  we just considered a modulus of continuity of the form  $\Omega(x)=|\log(x)|^{-\gamma}$ then at $t=O(1)$, we have 

$$
\frac{\Omega(t)}{\Omega_0}\approx \frac{ |\log (x_0  |\log(x_0)|^{1-\gamma}  ))|^{-\gamma}}{|\log(x_0)|^{-\gamma}}\approx O(1).
$$

The idea is to construct the  modulus of continuity such that at the spatial scale $x\approx x_0  |\log(x_0)|^{1-\gamma}$, where the trajectory of the particle will arrive at time $t=O(1)$, the modulus of continuity takes the following form: 

$$|\log (x_0  |\log(x_0)|^{1-\gamma}  ))|^{-\gamma} \log|\log(x_0  |\log(x_0)|^{1-\gamma} )|, $$ such that we now have instead the following:

$$
\frac{\Omega(t)}{\Omega_0}\approx \frac{ |\log (x_0  |\log(x_0)|^{1-\gamma}  ))|^{-\gamma} \log|\log(x_0  |\log(x_0)|^{1-\gamma} )| }{|\log(x_0)|^{-\gamma}}\approx  \log|\log(x_0)|. 
$$

The construction will be done along a sequence \(x_n \rightarrow 0\) such that the modulus of continuity stays concave and monotonic. We see from above that the modulus of continuity will not be explicit. The main contribution of this work is the construction of the modulus of continuity, which will be done in Section \ref{ConsInit}. To control the 2D Euler dynamics and justify the above argument, we follow the strategy of Elgindi-Jeong \cite{EJ1} and Jeong \cite{J}. We especially benefited from the argument by Jeong in \cite{J}. The idea here is to use a generalization of the Key Lemma of Kiselev-\v{S}ver\'ak  in \cite{KS} by Elgindi \cite{E} and Elgindi-Jeong \cite{EJ2}, see Lemma \ref{EJLemma}, which will allow us to obtain sharp estimates for the flow map trajectories and control the dynamics. This will be done in Section \ref{Loss}.

\subsection{Organization: } 

This paper is organized as follow: In Section \ref{Prelim}, we include some preliminary results that will be used throughout the paper.  In Section \ref{PropEx}, we show the propagation of an explicit family of moduli of continuity for the 2D Euler equations, which are rougher than Dini continuous. In Section \ref{Const}, we construct initial data with the modulus of continuity that will be shown to be lost for the 2D Euler equations. In Section \ref{Loss}, we prove our main result, which shows that there exist solutions for the 2D Euler equations that lose their modulus of continuity.

\subsection{Notation} 
In this paper, we will be working with polar coordinates. Given a point $x=(x_1,x_2) \in \mathbb{R}^2 $ in Cartesian coordinates, we will write $$r=\sqrt{x_1^2+x_2^2}, $$ to denote the radial component, and 
$$\theta=\tan^{-1}\big(\frac{x_2}{x_1}\big), $$
to denote the angular component. We will interchangeably use the notation $x=(x_1,x_2)=(r,\theta)$ to refer to a point in $\mathbb{R}^{2}$. 

For the velocity field, We will use the notation $u=u^{r}e_r+u^{\theta}e_{\theta}$, where $u^{r} \,  \text{and} \, \,  u^{\theta}$  are the radial and angular components of the velocity field, respectively, and $e_r\,\, \text{and} \,  \, e_{\theta}$ are the unit vectors in the radial and angular directions, respectively

Further, given a point $(r_0,\theta_0)$ in polar coordinates, we denote the flow map $$\Phi(t,r_0,\theta_0)=(\Phi_{r}(t,r_0,\theta_0) , \Phi_{\theta}(t,r_0,\theta_0)),$$ where at time $t$, we have $\Phi_{r}(t,r_0,\theta_0) \, \text{and}   \, \Phi_{\theta}(t,r_0,\theta_0)$   denoting respectively the radial and angular locations of the particle  $(r_0,\theta_0)$. To shorten the notation, we will occasionally suppress the dependence on the initial point $(r_0,\theta_0)$ and write 
$\Phi(t)=(\Phi_{r}(t) , \Phi_{\theta}(t))$.

\section{Preliminary results:}\label{Prelim}
In this section, we include some preliminary results which we will use in the paper. In Lemma \ref{YudEst}, we recall the Log-Lipschitz estimates for the velocity field for bounded vorticity. In Lemma \ref{YudEstFlow}, we recall the Yudovich estimate for the flow map when the velocity field is Log-Lipschitz. Finally, in Lemma \ref{EJLemma}, we recall a generalization of the Key lemma of Kiselev-\v{S}ver\'ak  in \cite{KS} by Elgindi \cite{E} and Elgindi-Jeong \cite{EJ2}, which we use in Section \ref{Loss}.

\begin{lemma}\label{YudEst}(Yudovich \cite{Y1}) Let $\omega$ be a bounded vorticity with compact support, $\omega \in L^{\infty}_c(\mathbb{R}^2)$. Then the corresponding velocity field  $u=\nabla^{\perp}\Delta^{-1} \omega$ satisfies the following:

$$
|u(t,x)-u(t,y)| \leq c |\omega|_{L^{\infty} \cap L^{1}} |x-y| \,  |\log(|x-y|)|,
$$
whenever $|x-y|$ is small.

\end{lemma}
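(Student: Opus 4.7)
The estimate follows from the Biot–Savart representation \(u(t,\cdot) = K * \omega(t,\cdot)\) with kernel \(K(z) = \frac{1}{2\pi}\, z^{\perp}/|z|^{2}\); the time parameter plays no role and I suppress it. Writing \(u(x) - u(y) = \int_{\mathbb{R}^{2}} (K(x-z) - K(y-z))\, \omega(z)\, dz\), the plan is to split the integration into three regions according to \(r := |x-y|\): a near zone \(A = \{|x-z| \leq 2r\}\), a mid annulus \(B = \{2r < |x-z| \leq 1\}\), and a far zone \(C = \{|x-z| > 1\}\), and to estimate each separately.

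On \(A\) I would simply use the triangle inequality \(|K(x-z)-K(y-z)| \leq |K(x-z)| + |K(y-z)|\), bound each term by \(C|x-z|^{-1}\) and \(C|y-z|^{-1}\) respectively, and invoke the local integrability \(\int_{|w| \leq R} |w|^{-1}\, dw = 2\pi R\). Since \(|y-z| \leq 3r\) whenever \(z \in A\), this contributes \(C|\omega|_{L^{\infty}} r\), which is already subsumed by the claimed bound. On \(B\) I would apply the mean-value theorem along the segment from \(x\) to \(y\); because \(|x-z| \geq 2r\) on \(B\), every point \(\xi\) on the segment satisfies \(|\xi-z| \geq |x-z|/2\), and the pointwise gradient bound \(|\nabla K(\xi-z)| \leq C|\xi-z|^{-2}\) gives \(|K(x-z)-K(y-z)| \leq Cr|x-z|^{-2}\). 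Integrating in polar coordinates produces \(Cr \int_{2r}^{1} \rho^{-1}\, d\rho \leq Cr|\log r|\), which, paired with \(|\omega|_{L^{\infty}}\), is the source of the logarithmic factor. On \(C\) the same mean-value bound applies, but there \(|x-z|^{-2} \leq 1\), and pairing with \(|\omega|_{L^{1}}\) (finite because \(\omega\) is bounded and compactly supported) yields a contribution of order \(|\omega|_{L^{1}} r\).

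Summing the three contributions gives \(C|\omega|_{L^{\infty} \cap L^{1}}\, r\, |\log r|\) for \(r\) small, which is exactly the claim. The only place requiring real care is the geometric lower bound \(|\xi-z| \geq |x-z|/2\) on the mid annulus, which dictates splitting at the threshold \(2r\) rather than at \(r\); once this is in place, the logarithm arises purely from the size of the annulus \(B\), and no cancellation or singular integral estimate is needed. I do not expect any serious obstacle—the argument is classical—but the bookkeeping of constants across the three regions is the only part requiring a careful write-up.
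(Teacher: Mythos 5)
Your argument is correct: the three-zone splitting of the Biot--Savart kernel with the triangle inequality near the singularity, the mean-value/gradient bound on the intermediate annulus (the source of the logarithm), and the $L^1$ bound in the far zone is exactly the classical proof. The paper itself offers no proof and simply cites the standard references (Majda--Bertozzi, Marchioro--Pulvirenti), where this is precisely the argument given, so your write-up matches the intended proof.
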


\begin{proof}

See textbooks \cite{MB,MP}, which are standard reference for the above result.

\end{proof}

\begin{lemma}\label{YudEstFlow}(Yudovich \cite{Y1})  Let $\omega$ be a bounded vorticity with compact support, $\omega \in L^{\infty}_c(\mathbb{R}^2)$. Then the flow map  $\Phi$  defined by:
$$\frac{d}{dt}\Phi=u(\Phi)$$
satisfies the following:
$$
c|x-y|^{e^{ct |\omega|_{L^{\infty} \cap L^{1}} }} \leq |\Phi(t,x)-\Phi(t,y)| \leq C|x-y|^{e^{-C t |\omega|_{L^{\infty} \cap L^{1}}}},
$$

whenever $|x-y|$ is small. 

\end{lemma}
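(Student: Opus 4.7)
The plan is the standard Osgood/Gronwall argument applied to the separation $f(t) := |\Phi(t,x) - \Phi(t,y)|$, using the log-Lipschitz estimate from Lemma \ref{YudEst} as the only input.

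First, I would fix $x\ne y$ with $|x-y|$ small and set $f(t) := |\Phi(t,x) - \Phi(t,y)|$, which is strictly positive since the Yudovich flow is a homeomorphism. Writing $\dot\Phi = u\circ \Phi$, Cauchy--Schwarz on $\tfrac{d}{dt}|\Phi(t,x)-\Phi(t,y)|^2$ gives
$$
\Bigl|\tfrac{d}{dt} f(t)\Bigr| \;\leq\; \bigl|u(t,\Phi(t,x)) - u(t,\Phi(t,y))\bigr|.
$$
Applying Lemma \ref{YudEst} (valid while $f(t)$ is small) with constant $M := c\,|\omega|_{L^{\infty}\cap L^{1}}$, which is preserved in time since the vorticity is transported by an incompressible flow, I get
$$
\bigl|\dot f(t)\bigr| \;\leq\; M\, f(t)\,\bigl|\log f(t)\bigr|.
$$

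Next, I would change variables to $g(t) := -\log f(t) = |\log f(t)|$, which is positive while $f(t) < 1$. A direct differentiation gives $g'(t) = -\dot f(t)/f(t)$, hence
$$
\bigl|g'(t)\bigr| \;\leq\; M\, g(t),
$$
a linear differential inequality. Gronwall's lemma in both directions yields
$$
g(0)\, e^{-M t} \;\leq\; g(t) \;\leq\; g(0)\, e^{M t},
$$
i.e.\ $|\log f(0)|\, e^{-Mt} \leq |\log f(t)| \leq |\log f(0)|\, e^{Mt}$. Exponentiating and using $f(0), f(t) < 1$ (so the logs have a definite sign), this translates into
$$
f(0)^{e^{M t}} \;\leq\; f(t) \;\leq\; f(0)^{e^{-M t}},
$$
which is exactly the claimed double bound after absorbing $M$ into the generic constants $c$ and $C$.

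The one subtlety I expect to have to handle carefully is the smallness caveat attached to Lemma \ref{YudEst}: the log-Lipschitz bound is stated for $|x-y|$ small, so I must ensure $f(t)$ stays in the regime where it applies. This is easy on any bounded time interval: the upper bound $f(t) \le f(0)^{e^{-Mt}}$ is a continuity-based a~priori estimate that, together with a standard bootstrap, shows $f(t)$ remains below any fixed threshold as long as $f(0)$ is sufficiently small depending on $t$. The explicit constants $c, C$ in the final inequality then arise from choosing this threshold and from any additive constant absorbed in the log-Lipschitz modulus. Everything else is a routine ODE manipulation.
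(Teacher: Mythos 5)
Your proposal is correct and follows essentially the same route as the paper: both apply the log-Lipschitz estimate of Lemma \ref{YudEst} to the particle separation and then close with a Gronwall/comparison argument for the resulting differential inequality, which the paper leaves as "solve the corresponding ODE" and you carry out explicitly via the substitution $g=-\log f$. Your additional remarks on the conservation of $|\omega|_{L^\infty\cap L^1}$ and on keeping $f(t)$ in the smallness regime are exactly the details the paper's terse proof implicitly assumes.
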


\begin{proof}

For $x, y \in \mathbb{R}^2$, the flow map corresponding to particles starting at  $x, \text{and} \,  y$, respectively, is defined as follows:  $$\frac{d}{dt}\Phi(t,x)=u(t,\Phi(t,x)),  \, \text{and} \,   \, \frac{d}{dt}\Phi(t,y)=u(t,\Phi(t,y))   $$

with $\Phi(0,x)=x$, and $\Phi(0,y)=y$.   Thus, using the estimate on the velocity field from Lemma \ref{YudEst}, we obtain

$$|\omega|_{L^{\infty} \cap L^{1}}|\Phi(t,x)-\Phi(t,y)|\log(|\Phi(t,x)-\Phi(t,y)|) \leq \frac{d}{dt} |\Phi(t,x)-\Phi(t,y)|\leq  |\omega|_{L^{\infty} \cap L^{1}} |\Phi(t,x)-\Phi(t,y)|  \, |\log(|\Phi(t,x)-\Phi(t,y)|)|. $$

Then,  the statement follows from Gronwall inequality/comparison principle by the solving the corresponding ODE.

\end{proof}

The following Lemma \ref{EJLemma} from Elgindi \cite{E} and Elgindi-Jeong \cite{EJ2} allows us to precisely locate the log-Lipschitz part of the velocity field. Lemma \ref{EJLemma} is generalization of the Key Lemma of Kiselev-\v{S}ver\'ak in \cite{KS} for any bounded vorticity on the whole plane $\mathbb{R}^2$ with compact support. We will use this Lemma to obtain sharp estimates on the flow map in Section \ref{Loss}. See  the work of Elgindi \cite{TE} for the Biot-Savart decomposition for the 3D Euler equations.

\begin{lemma}\label{EJLemma}(Elgindi \cite{E}, Elgindi-Jeong \cite{EJ2} Lemma 5.1) Let $\omega$ be bounded vorticity with compact support, $ \omega  \in L^{\infty}_c(\mathbb{R}^2)$. Then the corresponding velocity field $u=\nabla^{\perp}\Delta^{-1} \omega$ satisfies the following:
   $$
\Big|u(r,\theta)-u(0)-\frac{1}{2\pi} \begin{pmatrix}
-\cos(\theta)\\
\sin(\theta)
\end{pmatrix} r I^s(r) -\frac{1}{2\pi} \begin{pmatrix}
\sin(\theta)\\
\cos(\theta)
\end{pmatrix} r I^c(r) \Big| \leq C  r |\omega|_{L^\infty}
 $$
where the constant $C$ is independent of the support of $\omega$, and the operators  $I^s$ and $I^c$ are defined as follows:
$$
I^s(r)=\int_r^{\infty} \int_0^{2\pi} \sin(2\theta)\frac{ \omega(s,\theta)}{s}  \, d \theta \, ds, \,  \, \text{and}  \quad I^c(r)=\int_r^{\infty} \int_0^{2\pi} \cos(2\theta)\frac{ \omega(s,\theta)}{s}  \, d \theta \, ds.
$$

\end{lemma}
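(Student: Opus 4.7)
The plan is to produce a specific initial vorticity $\omega_0 \in L^\infty_c(\R^2)$ with odd-odd symmetry and a carefully designed modulus of continuity $\Omega_0$, then to use the Elgindi--Jeong Biot--Savart decomposition in Lemma \ref{EJLemma} to track a Lagrangian trajectory long enough that transport inflates $\Omega(t,\rho)/\Omega_0(\rho)$ by a factor $\log|\log\rho|$ at an infinite sequence of scales $\rho = x_n \to 0$. Odd-odd symmetry fixes the origin as a stagnation point and makes $I^c \equiv 0$, while allowing $|I^s(r)|$ to saturate a lower bound of the form $\Omega_0(r)|\log r|$.

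The first task is the joint construction of $\Omega_0$ and $\omega_0$ in Section \ref{ConsInit}. I would pick a rapidly decaying sequence $x_n\to 0$ and introduce the partner scales $y_n := x_n |\log x_n|^{1-\gamma}$ for a fixed $0<\gamma<1$. On each band, $\Omega_0$ is set up so that $\Omega_0(x_n)\approx A(x_n)$ with $A(\rho):=|\log\rho|^{-\gamma}$, while $\Omega_0(y_n)\approx B(y_n)$ with $B(\rho):=A(\rho)\log|\log\rho|$, interpolating between these values by a piecewise-affine function of $\log\log(1/\rho)$ so that $\Omega_0$ stays monotone and concave on $(0,\tfrac12]$. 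The vorticity $\omega_0$ is then assembled from angularly oscillating building blocks (of the form $\chi(r)\,\mathrm{sgn}(\sin 2\theta)$) whose radial profile $\chi$ realizes $\Omega_0$ scale by scale and whose sign structure makes $|I^s(r)|\gtrsim \Omega_0(r)|\log r|$ on each of the windows $[x_n,y_n]$. Odd-odd symmetry guarantees $u(t,0)=0$ and propagates $I^c\equiv 0$, while the coarse sign structure of $\omega$ on the scales of interest survives under the flow for the $O(1)$ time we need.

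Next, in Section \ref{Loss}, I would run the dynamical part. Applying Lemma \ref{EJLemma} at a fixed angle $\theta_*$ where $\sin 2\theta_*$ is bounded away from zero, the radial equation for the flow map takes the form
\[
\frac{d}{dt}\Phi_r(t) \;=\; c_*\,\Phi_r(t)\, I^s\bigl(t,\Phi_r(t)\bigr)\;+\;O\!\bigl(\Phi_r(t)\,\|\omega_0\|_{L^\infty}\bigr),
\]
and the lower bound on $|I^s|$ (preserved in time via vorticity transport and the quantitative Yudovich flow estimate of Lemma \ref{YudEstFlow}) converts this into the comparison ODE $\dot\Phi_r \gtrsim \Phi_r\,|\log\Phi_r|^{1-\gamma}$ on the expanding branch. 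A Gronwall/comparison argument on $[0,1]$ then yields the scale correspondence $\Phi_r(1,x_n)\approx y_n$, and, symmetrically along the compressing ray, the dual statement that a pair of points initially at distance $\sim y_n$ lies within distance $\lesssim x_n$ at time $t=1$.

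Combining these ingredients, transport of vorticity gives
\[
\Omega(1, x_n) \;\ge\; |\omega_0(a') - \omega_0(b')|
\]
for a pair $(a',b')$ with $|a'-b'|\sim y_n$ and $|\Phi(1,a')-\Phi(1,b')|\lesssim x_n$; the construction ensures $|\omega_0(a')-\omega_0(b')|\approx \Omega_0(y_n)\approx B(y_n)\approx A(x_n)\log|\log x_n|$. Dividing by $\Omega_0(x_n)\approx A(x_n)$ gives a ratio $\gtrsim \log|\log x_n|$, which exceeds $\tfrac12\log|\log x_n|$ for all $n$ sufficiently large, proving the theorem along $\rho=x_n$. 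The hardest step will be the joint construction in Section \ref{ConsInit}: one must simultaneously realize both profiles $A$ and $B$ at matched scales, preserve concavity and monotonicity of $\Omega_0$, and engineer $\omega_0$ so that $I^s$ satisfies the required lower bound robustly under the flow; once this is done, the remainder is a quantitative comparison driven by Lemma \ref{EJLemma} and Lemma \ref{YudEstFlow}.
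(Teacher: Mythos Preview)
Your proposal does not address the stated lemma at all. Lemma~\ref{EJLemma} is a pointwise Biot--Savart decomposition: it asserts that for any $\omega\in L^\infty_c(\R^2)$, the velocity $u=\nabla^\perp\Delta^{-1}\omega$ differs from the explicit expression $u(0)+\tfrac{1}{2\pi}r\big((-\cos\theta,\sin\theta)^T I^s(r)+(\sin\theta,\cos\theta)^T I^c(r)\big)$ by a Lipschitz error $O(r\|\omega\|_{L^\infty})$. A proof of this statement would proceed by writing $u(x)-u(0)$ as a singular integral, splitting into near-field ($|y|<r$) and far-field ($|y|>r$) regions, and extracting the leading angular harmonics of the far-field kernel; everything else is bounded by $Cr\|\omega\|_{L^\infty}$. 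You have not done any of this.

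What you have written is instead a proof plan for the \emph{main theorem} of the paper---the loss-of-modulus-of-continuity result---in which Lemma~\ref{EJLemma} is merely invoked as a tool. The paper itself does not prove Lemma~\ref{EJLemma} either; it is quoted from Elgindi~\cite{E} and Elgindi--Jeong~\cite{EJ2} as a preliminary result. So there is no ``paper's own proof'' to compare against, and your proposal is simply off-target.

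For what it is worth, your sketch of the main theorem is close in spirit to the paper's actual argument in Sections~\ref{ConsInit}--\ref{Loss}, with a few differences. The paper takes $\omega_0(r,\theta)=g(r)h(\theta)$ with $h$ a smooth bump respecting $\sin 2\theta$ symmetry (rather than your $\mathrm{sgn}(\sin 2\theta)$ choice), and it constructs $g=G$ directly as a piecewise-linear concave function in $x$ (not in $\log\log(1/\rho)$), alternating between the profiles $|\log x|^{-\gamma}$ and $|\log x|^{-\gamma}\log|\log x|$ along a sequence $x_n\to 0$ tied together by $x_{n+1}|\log x_{n+1}|^{1-\gamma}=x_n$. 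The paper tracks the \emph{forward} radial flow to get $\Phi_r(t)\le r_0 e^{-cg(r_0)|\log r_0|t}$ and then inverts, rather than running a comparison on the expanding branch directly. It also compares against the axis (where $\omega\equiv 0$ by odd symmetry) rather than tracking a pair of points. These are implementation choices; your overall mechanism matches the paper's, but none of it constitutes a proof of Lemma~\ref{EJLemma}.
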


\section{Propagation of explicit examples of moduli of continuity}\label{PropEx}

In this section, we show that the 2D Euler equations propagate an explicit family of moduli of continuity for the vorticity of the following form:
\begin{equation}\label{rmodulus}
\begin{split}
\Omega(\rho)= |\log(\rho)|^{-\gamma}, \,\, \,   \text{for}  \,\, 0 <\gamma< 1.
\end{split}
\end{equation}
These moduli of continuity are rougher than Dini modulus of continuity, which are known to be propagated by Euler equations, as shown by Koch \cite{K}. As mentioned, the key idea to show the propagation of Dini modulus of continuity for 2D Euler is the fact that when the vorticity is Dini continuous, the velocity field is Lipschitz.  The family of moduli of continuity we consider, \eqref{rmodulus}, allows for the gradient of the velocity field to be unbounded, but using the structure of the modulus of continuity allows us to still propagate it by applying Yudovich's estimate, which only requires the vorticity to be bounded. See also the work of Chae-Jeong in \cite{ChJ} where the authors show the preservation of the log-H\"older coefficients of the vorticity when the velocity field is Lipschitz.

\begin{proposition} Let $\omega(t)$ be a solution of 2D Euler equations with the following modulus of continuity for the initial data:

$$
|\omega_0(x)-\omega_0(y)|\leq \Omega_0(|x-y|)= \frac{1}{|\log( \, |x-y|)|^{\gamma}}, \,\, \,   \text{for}  \,\, 0 <\gamma< 1,
$$
for all $|x-y| \leq \frac{1}{2}$. Then the modulus of continuity is propagated in time.

\end{proposition}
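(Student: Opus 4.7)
The plan is to exploit the fact that vorticity is transported by the flow, so that estimates on the backward flow map immediately yield estimates on the modulus of continuity of $\omega(t,\cdot)$. Since $\omega(t,x) = \omega_0(\Phi^{-1}(t,x))$, we have
\begin{equation*}
|\omega(t,x)-\omega(t,y)| \;\leq\; \Omega_0\bigl(|\Phi^{-1}(t,x)-\Phi^{-1}(t,y)|\bigr).
\end{equation*}
So the whole question reduces to comparing $\Omega_0$ evaluated at the displaced points with $\Omega_0(|x-y|)$. Since $\omega_0 \in L^\infty \cap L^1$ (it is bounded with compact support, being a bounded perturbation around the uniformly continuous initial data), Yudovich's estimate (Lemma \ref{YudEstFlow}) applied to the backward flow gives, for $|x-y|$ small,
\begin{equation*}
|\Phi^{-1}(t,x)-\Phi^{-1}(t,y)| \;\leq\; C\,|x-y|^{\alpha(t)}, \qquad \alpha(t) := e^{-C t |\omega_0|_{L^\infty \cap L^1}} \in (0,1).
\end{equation*}

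The second step is the arithmetic miracle that makes the logarithmic family $|\log\rho|^{-\gamma}$ behave well under Hölder deformations of the argument. For $|x-y|$ sufficiently small,
\begin{equation*}
\Omega_0\bigl(C|x-y|^{\alpha(t)}\bigr) = \bigl|\,\alpha(t)\log|x-y| + \log C\,\bigr|^{-\gamma} \;\leq\; \alpha(t)^{-\gamma}\bigl(1+o(1)\bigr)\,|\log|x-y||^{-\gamma} = e^{\gamma C t |\omega_0|_{L^\infty \cap L^1}}\bigl(1+o(1)\bigr)\,\Omega_0(|x-y|).
\end{equation*}
Combining this with the transport identity gives, for $|x-y|$ small,
\begin{equation*}
|\omega(t,x)-\omega(t,y)| \;\leq\; 2\, e^{\gamma C t |\omega_0|_{L^\infty \cap L^1}}\,\Omega_0(|x-y|),
\end{equation*}
which is the propagation statement, with a growth rate $e^{\gamma C t}$ consistent with the heuristic in the introduction (rougher moduli, i.e.\ smaller $\gamma$, grow more slowly).

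For the range of $|x-y|$ not small, the bound is trivial: $|\omega(t,x)-\omega(t,y)| \leq 2|\omega_0|_{L^\infty}$, and $\Omega_0(\rho)$ is bounded below on any interval $[\rho_0,\tfrac{1}{2}]$, so a constant multiple of $\Omega_0$ absorbs this region. The main thing to verify carefully, and essentially the only subtle point, is the passage from $|\alpha \log\rho + \log C|^{-\gamma}$ to $\alpha^{-\gamma}|\log\rho|^{-\gamma}$: this needs a uniform comparison for all $\rho$ below some threshold depending on $t$ (hence on $\alpha(t)$), which is straightforward since $\alpha(t) > 0$ is fixed for fixed $t$ and $\log\rho \to -\infty$. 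The conceptual content is that the logarithmic loss in Yudovich's bound on the flow is precisely the loss this family of moduli can absorb — this is the reason it fails for purely polynomial (Hölder) moduli.
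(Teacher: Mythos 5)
Your argument is essentially the same as the paper's: both reduce to the transport identity $\omega(t,x)=\omega_0(\Phi^{-1}_t(x))$, apply the Yudovich flow estimate (Lemma \ref{YudEstFlow}) to bound $|\Phi^{-1}_t(x)-\Phi^{-1}_t(y)|$ by a power of $|x-y|$, and then use the arithmetic fact that $|\log(\rho^{\alpha})|^{-\gamma}=\alpha^{-\gamma}|\log\rho|^{-\gamma}$ to convert the Hölder loss of the flow into a time-dependent multiplicative constant $e^{\gamma C t}$. The only difference is cosmetic: you track the multiplicative constant $C$ in the Yudovich bound and absorb $\log C$ into a $1+o(1)$ factor, whereas the paper suppresses it; the key idea and the resulting exponential growth rate are identical.
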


\begin{proof} Here we have

$$  
\Omega(t,\rho)= \sup_{|x-y|\leq \rho}  |\omega(t,x)-\omega(t,y)| = \sup_{|x-y|\leq \rho} |\omega_0(\Phi^{-1}_t(x))-\omega_0(\Phi^{-1}_t(y))| \
$$
but we know that

$$|\omega_0(\Phi^{-1}_t(x))-\omega_0(\Phi^{-1}_t(y))| \leq \Omega_0(|\Phi_t^{-1}(x)-\Phi_t^{-1}(y)|) =  \frac{1}{|\log( \, |\Phi^{-1}_t(x)-\Phi^{-1}_t(y)|)|^{\gamma}}.   $$

Thus by Yudovich estimates,    which only requires $\omega$ to be in $L^{\infty}$, we have 

$$
\frac{1}{|\log( \, |\Phi^{-1}_t(x)-\Phi^{-1}_t(y)|)|^{\gamma }}   \leq  \frac{1}{|\log( \, |x-y|^{e^{-t}})|^{\gamma}}=\frac{e^{\gamma t}}{|\log( \, |x-y|)|^{\gamma}}.
$$

Hence,

$$|\omega_0(\Phi^{-1}_t(x))-\omega_0(\Phi^{-1}_t(y))| \leq  \frac{e^{\gamma t  }}{|\log( \, |x-y|)|^{\gamma }}. $$

Thus

$$\Omega(t,\rho) \leq \Omega_0(\rho)  e^{\gamma t }.  $$

\end{proof}

\begin{remark}
We observe that the rougher the modulus of continuity, smaller $\gamma $, the slower the growth in time.  
\end{remark}

\begin{remark}
The same above argument can also be used to show that the following family of moduli of continuity can also be propagated: 
\begin{equation*}\label{rmodulus2}
\begin{split}
\Omega(\rho)= |\log(\rho)|^{-\gamma} \log(|\log(\rho)|) \,\, \,   \text{for}  \,\, 0 <\gamma< 1.
\end{split}
\end{equation*}

\end{remark}

\section{Construction of initial data}\label{ConsInit}

In this section, we construct the main part of the initial data, which has a modulus of continuity that will be lost for the 2D Euler equations. We consider this to be the main contribution of this work. The function constructed here will form the radial part of the initial data; see Section \ref{Loss} for more details. This section is organized as follows: in Proposition \ref{Const}, we show how to construct the initial data. In Corollary \ref{ScaledConst}, we demonstrate how we can rescale the initial data for use in Section \ref{Loss}.

\begin{proposition}\label{Const}
  There exist  a family of  continuous monotone concave functions $G(x)$ on $[0,1]$ such that   
\begin{equation}\label{unbdd}
\begin{split}
\limsup_{x\rightarrow 0} \frac{G(x \,G(x) \, |\log(x)|)}{G(x)}=\infty.
\end{split}
\end{equation}
Quantitatively, we can find a sequence $x_n \rightarrow 0$, as $n\rightarrow \infty$, such that 

$$
\frac{G( x_n \,G(x_n) \, |\log(x_n)|)}{G(x_n)}\geq   \frac{1}{2}\log(|\log(x_n)|).
$$

\end{proposition}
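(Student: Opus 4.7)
The plan is to fix $\gamma\in(0,1)$ and to construct $G$ as a continuous piecewise linear function whose graph oscillates, along a carefully chosen sequence of scales, between the two envelopes $|\log x|^{-\gamma}$ and $\log(|\log x|)\,|\log x|^{-\gamma}$ suggested by the heuristic in Section~\ref{sketch}. I would select a rapidly decreasing sequence $x_n\downarrow 0$, say $x_n=e^{-e^{n^2}}$, introduce companion points $y_n:=x_n\,|\log x_n|^{1-\gamma}$ (so that $x_n<y_n<x_{n-1}$ for all large $n$), and prescribe the interpolation data
\[
  G(x_n):=|\log x_n|^{-\gamma},\qquad G(y_n):=\log(|\log x_n|)\,|\log x_n|^{-\gamma}.
\]
I would then declare $G$ linear on each of the intervals $[x_n,y_n]$ and $[y_n,x_{n-1}]$, set $G(0):=0$ (which is automatically consistent with continuity since $G(x_n)\to 0$), and extend $G$ on the interval from $y_1$ up to $1$ in any way that preserves monotonicity and concavity.

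The quantitative conclusion is then built into the construction: by the very definition of $y_n$,
\[
  x_n\,G(x_n)\,|\log x_n|=x_n\,|\log x_n|^{-\gamma}\,|\log x_n|=y_n,
\]
so
\[
  \frac{G\bigl(x_n\,G(x_n)\,|\log x_n|\bigr)}{G(x_n)}=\frac{G(y_n)}{G(x_n)}=\log(|\log x_n|)\,\ge\,\tfrac12\log(|\log x_n|).
\]
Continuity is automatic from the piecewise linear definition, and monotonicity reduces to positivity of every interpolation slope; the only nontrivial case is the gentle slope on $[y_n,x_{n-1}]$, which is positive exactly when $|\log x_{n-1}|^{-\gamma}>\log(|\log x_n|)\,|\log x_n|^{-\gamma}$. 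This is an explicit lower bound on how fast $|\log x_n|/|\log x_{n-1}|$ must grow, and is satisfied by any sufficiently sparse sequence.

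The principal obstacle is concavity. For a piecewise linear function this amounts to asking that the successive slopes be non-increasing as $x$ grows. Denoting by $s_n^{\uparrow}$ the steep slope on $[x_n,y_n]$ and by $s_n^{\downarrow}$ the gentle slope on $[y_n,x_{n-1}]$, I need the chain $s_n^{\uparrow}\ge s_n^{\downarrow}\ge s_{n-1}^{\uparrow}$ for every~$n$. A direct computation, which I would spell out in detail, gives $s_n^{\uparrow}\asymp\log(|\log x_n|)/(x_n\,|\log x_n|)$ and, under the sparsity condition above, $s_n^{\downarrow}\asymp|\log x_{n-1}|^{-\gamma}/x_{n-1}$. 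The inequality $s_n^{\uparrow}\ge s_n^{\downarrow}$ is then driven by $x_{n-1}/x_n$ being astronomically large, whereas $s_n^{\downarrow}\ge s_{n-1}^{\uparrow}$ collapses to $|\log x_{n-1}|^{1-\gamma}\ge\log(|\log x_{n-1}|)$, which holds automatically for all sufficiently small $x_{n-1}$ since $1-\gamma>0$. The real labor of the proof is consolidating these three requirements (positivity of slopes, concavity at each $y_n$, and concavity at each $x_{n-1}$) into a single quantitative growth condition on $\{x_n\}$, and it is this bookkeeping that ultimately forces the doubly exponential sparsity in the choice above. Once it is in place, $G$ is a bona fide continuous, monotone, concave function on $[0,1]$ realizing the claimed blow-up, and letting $\gamma$ range over $(0,1)$ yields the full family.
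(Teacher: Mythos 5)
Your proposal is correct and builds essentially the same object as the paper: a piecewise-linear, concave, monotone $G$ that oscillates between the envelopes $|\log x|^{-\gamma}$ and $\log(|\log x|)\,|\log x|^{-\gamma}$, with the lower-to-upper jump tied to the rescaling $x\mapsto x|\log x|^{1-\gamma}$. The difference lies in how the breakpoints are placed and how concavity is controlled. In the paper's construction the function is literally linear \emph{across} each lower-envelope point: the next upper-envelope point $x_2$ is defined implicitly, as the intersection of the extended line $L_{(G(x_0),G(x_1))}$ with the curve $\log(|\log x|)\,|\log x|^{-\gamma}$, so $G$ has no corner at $x_1$ and concavity need only be checked at the upper-envelope points (the paper's Properties 1 and 2, which both reduce to the $y$-intercept of that line being positive). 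Your construction instead fixes an explicit, doubly-exponentially sparse sequence $x_n=e^{-e^{n^2}}$ and places genuine corners at \emph{both} the lower-envelope points $x_n$ and the upper-envelope points $y_n=x_n|\log x_n|^{1-\gamma}$, which means you must verify a non-increasing slope chain $s_n^{\uparrow}\ge s_n^{\downarrow}\ge s_{n-1}^{\uparrow}$ at two kinds of breakpoints; as you note, the first inequality is driven by the astronomical ratio $x_{n-1}/x_n$ and the second collapses to $|\log x_{n-1}|^{1-\gamma}\ge\log|\log x_{n-1}|$, so both are safe. Your route is more explicit (the sequence, the slopes, and the final quantitative bound $\log|\log x_n|$ — even a bit cleaner than the paper's $\tfrac12\log|\log x_n|$ — are all in closed form), while the paper's intersection-defined breakpoints make the concavity of each joint automatic by construction, at the price of being less explicit. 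The bookkeeping you flag as "the real labor of the proof" is exactly what the paper's Properties 1 and 2 accomplish, just in a more implicit form. Both arguments establish the proposition.
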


\begin{proof}

We will construct the function iteratively. We will define the function \(G(x)\) on discrete points \(x_n\) for \(n \in \mathbb{Z}^{+}\), and then we will linearly interpolate between \(G(x_n)\) points to define \(G(x)\) everywhere else. For notation, given any two points \(a\) and \(b\), we define \(L_{a,b}(x)\) to be the equation of the line connecting them.

\begin{figure}[ht]
\centering
\tikzset{every picture/.style={line width=0.75pt}} 

\begin{tikzpicture}[x=0.75pt,y=0.75pt,yscale=-1,xscale=1]

\draw [line width=0.75]    (194.01,189.04) .. controls (194.01,94.08) and (285.01,33.04) .. (382.01,29.04) ;
\draw    (194.01,189.04) .. controls (194.01,100.04) and (284.01,47.04) .. (383.01,39.04) ;
\draw [line width=1.5]    (219.01,109.04) -- (357.01,31.04) ;
\draw [line width=1.5]    (203.85,141.35) -- (219.01,109.04) ;
\draw  (102.18,188.82) -- (466.75,189.71)(194.42,13.48) -- (193.88,232.92) (459.76,184.69) -- (466.75,189.71) -- (459.74,194.69) (189.4,20.47) -- (194.42,13.48) -- (199.4,20.49)  ;
\draw  [color={rgb, 255:red, 208; green, 2; blue, 27 }  ,draw opacity=1 ][line width=2.25] [line join = round][line cap = round] (204.12,141.05) .. controls (204.12,140.97) and (204.12,140.89) .. (204.12,140.8) ;
\draw  [color={rgb, 255:red, 208; green, 2; blue, 27 }  ,draw opacity=1 ][line width=2.25] [line join = round][line cap = round] (218.87,108.55) .. controls (218.87,108.55) and (218.87,108.55) .. (218.87,108.55) ;
\draw  [color={rgb, 255:red, 208; green, 2; blue, 27 }  ,draw opacity=1 ][line width=2.25] [line join = round][line cap = round] (324.46,49.26) .. controls (324.46,49.26) and (324.46,49.26) .. (324.46,49.26) ;
\draw  [color={rgb, 255:red, 208; green, 2; blue, 27 }  ,draw opacity=1 ][line width=2.25] [line join = round][line cap = round] (356.21,31.01) .. controls (356.21,31.01) and (356.21,31.01) .. (356.21,31.01) ;
\draw  [dash pattern={on 0.84pt off 2.51pt}]  (357.01,31.04) -- (356.71,189.89) ;
\draw  [dash pattern={on 0.84pt off 2.51pt}]  (324.21,49.44) -- (324.02,190.36) ;
\draw  [dash pattern={on 0.84pt off 2.51pt}]  (219.01,109.04) -- (219.23,189.56) ;
\draw  [dash pattern={on 0.84pt off 2.51pt}]  (203.85,141.35) -- (204.03,190.36) ;

\draw (348.82,192) node [anchor=north west][inner sep=0.75pt]  [font=\footnotesize]  {$x _{0}$};
\draw (316.02,191.4) node [anchor=north west][inner sep=0.75pt]  [font=\footnotesize]  {$x _{1}$};
\draw (212.22,191.4) node [anchor=north west][inner sep=0.75pt]  [font=\footnotesize]  {$x _{2}$};
\draw (195.99,191.44) node [anchor=north west][inner sep=0.75pt]  [font=\footnotesize]  {$x _{3}$};
\draw (387.01,16.44) node [anchor=north west][inner sep=0.75pt]  [font=\scriptsize]  {$|\log( x ) |^{-\gamma } \ \log( |\log( x ) |)$};
\draw (387.01,37.44) node [anchor=north west][inner sep=0.75pt]  [font=\scriptsize]  {$|\log( x ) |^{-\gamma }$};

\end{tikzpicture}
\caption{Sketch of the initial construction of the function $G(x)$.}\label{1-1}
\end{figure}
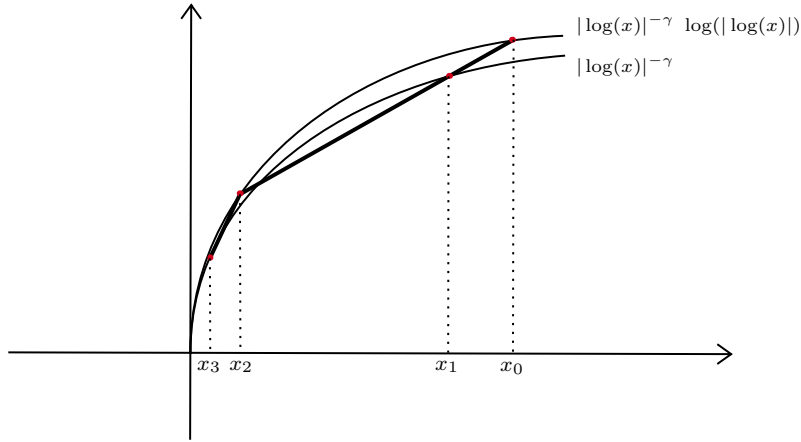

We take \(0 < \gamma < 1\), and we start by picking \(x_0\) small enough and defining  $$
G(x_0)=\frac{\log(|\log(x_0)|)}{|\log(x_0)|^{\gamma}}.
$$

Next, choose $x_1$ such that $x_1|\log(x_1)|^{1-\gamma}=x_0$, and define

$$
G(x_1)=\frac{1}{|\log(x_1)|^{\gamma}}.
$$

Now on \([x_1, x_0]\), we linearly interpolate to define \(G(x)\), see Figure 1. After that, we define \(x_2\) to be the intersection of the line connecting \(G(x_0)\) and \(G(x_1)\), \(L_{(G(x_0), G(x_1))}\), with the function \(f(x) = \frac{\log(|\log(x)|)}{|\log(x)|^{\gamma}}\) (which will always be well-defined, see Property 1 and Property 2 below), and define
$$G(x_2)=\frac{\log(|\log(x_2)|)}{|\log(x_2)|^{\gamma}}.$$

On \([x_2, x_1]\), we again linearly interpolate to define \(G(x)\). Finally, we define \(x_3\) such that \(x_3|\log(x_3)|^{1-\gamma} = x_2\), and then we just keep repeating the procedure. Namely, for an even \(n\), the point \(x_n\) is defined as the intersection of the line connecting \(G(x_{n-2})\) and \(G(x_{n-1})\) with the function \(f(x) = \frac{\log(|\log(x)|)}{|\log(x)|^{\gamma}}\), and then \(G(x_n)\) is defined as follows:

 $$G(x_n)=\frac{\log(|\log(x_n)|)}{|\log(x_n)|^{\gamma}}.$$ Then, the point $x_{n+1}$ is defined as  $x_{n+1}|\log(x_{n+1})|^{1-\gamma}=x_{n}$ and we set  $$
G(x_{n+1})=\frac{1}{|\log(x_{n+1})|^{\gamma}}.
$$

Now assuming that we can keep iterating this procedure as \(n \rightarrow \infty\) while keeping the function \(G(x)\) concave and vanishing at zero (this will be verified below, see Property 1 and Property 2), we first observe that

$$
\frac{G(x_0)}{G(x_1)} =\frac{G(x_1 \,G(x_1) \, |\log(x_1)|)}{G(x_1)}=\frac{G(x_1 |\log(x_1)|^{1-\gamma})}{G(x_1)} \geq  \log(|\log(x_1|\log(x_1)|^{1-\gamma})|).
$$

 Similarly, for  an even $n$, we have

$$
\frac{G(x_n)}{G(x_{n+1})}=\frac{G(x_{n+1} \,G(x_{n+1}) \, |\log(x_{n+1})|)}{G(x_{n+1})}=\frac{G(x_{n+1} |\log(x_{n+1})|^{1-\gamma})}{G(x_{n+1})} \geq \log(|\log(x_{n+1}|\log(x_{n+1})|^{1-\gamma})|).
$$

Thus,

$$
\frac{G(x_n)}{G(x_{n+1})}=\frac{G(x_{n+1} \,G(x_{n+1}) \, |\log(x_{n+1})|)}{G(x_{n+1})} \geq  \frac{1}{2}\log(|\log(x_n)|),
$$

and since we have $x_n \rightarrow 0$ as $n\rightarrow \infty$, we obtain

$$
\lim_{n \rightarrow \infty}\frac{G(x_n)}{G(x_{n+1})}= \lim_{n \rightarrow \infty} \frac{G(x_{n+1} \,G(x_{n+1}) \, |\log(x_{n+1})|)}{G(x_{n+1})}= \infty.
$$

Hence, we have $\eqref{unbdd}$. The only remaining step is to check that we can keep iterating the above procedure while maintaining the function \(G(x)\) to be concave and vanishing at zero. To see this, we need to verify \textbf{Property 1} and \textbf{Property 2} below. It is an elementary computation to verify these properties, but we include them here for completeness. It is enough to verify the properties for the construction of the function \(G(x)\) on the interval \([x_1, x_0]\) and show that \(x_2\) is well-defined because the procedure is iteratively the same.

\begin{itemize}

\item \textbf{Property 1):} The slope of the line \(L_{(G(x_0), G(x_1))}\) connecting \(G(x_0)\) and \(G(x_1)\) is less than the slope of the line \(L_{(G(x_0), 0)}\) connecting \(G(x_0)\) and zero.

We need this property because otherwise the function would not stay concave while still vanishing at zero. In addition, we need this property to ensure that the line \(L_{(G(x_0), G(x_1))}\) connecting \(G(x_0)\) and \(G(x_1)\) will intersect the function \(f(x) = \frac{\log(|\log(x)|)}{|\log(x)|^{\gamma}}\) again, in order for the point \(x_2\) to be well-defined.

To see this, we observe that the equation of the line \(L_{(G(x_0), 0)}(x)\) connecting \(G(x_0)\) and zero is:
$$  L_{(G(x_0), 0)}(x) = \frac{1}{x_0} \frac{\log(|\log(x_0)|)}{|\log(x_0)|^{\gamma}} x .$$
Thus, in order to show that the slope of the line \(L_{(G(x_0), G(x_1))}\) connecting \(G(x_0)\) and \(G(x_1)\) is less than the slope of the line \(L_{(G(x_0), 0)}\) connecting \(G(x_0)\) and zero, it suffices to show that \(L_{(G(x_0), 0)}(x_1) < G(x_1)\). Hence, we have

$$L_{(G(x_0),0)}(x_1)= \frac{1}{x_0}\frac{\log(|\log(x_0)|}{|\log(x_0)|^{\gamma}} x_1.$$

Then, by using the definition of $x_1$. Namely,   $x_1|\log(x_1)|^{1-\gamma}=x_0$, we have

$$L_{(G(x_0),0)}(x_1)\leq \frac{2\log(|\log(x_1|\log(x_1)|^{1-\gamma})|)}{|\log(x_1)|} < \frac{1}{|\log(x_1)|^{\gamma}}=G(x_1), $$

and thus Property 1 is satisfied.

\item  \textbf{Property 2):} The slope of the line \(L_{(G(x_0), G(x_1))}\) connecting \(G(x_0)\) and \(G(x_1)\) is also less than the slope of the line \(L_{(G(x_1), 0)}\) connecting \(G(x_1)\) and zero.

We need this property in order for the function \(G(x)\) to be concave and vanish at zero. If the slope of \(L_{(G(x_0), G(x_1))}\) were greater than the slope of \(L_{(G(x_1), 0)}\), then \(G(x)\) would have to either break concavity or vanish before \(x = 0\).

To show this, We observe that the line connecting $G(x_1)$ and zero is:

$$L_{(G(x_1),0)}(x)=\frac{1}{x_1}\frac{1}{|\log(x_1)|^{\gamma}}x \implies L'_{(G(x_1),0)}(x)  =\frac{1}{x_1}\frac{1}{|\log(x_1)|^{\gamma}}. $$

Computing the slope of the line connecting $G(x_0)$ and  $G(x_1)$ as follows:

$$G'(x_1)=\frac{G(x_0)-G(x_1)}{x_1|\log(x_1)|^{1-\gamma}-x_1}.$$

Then, using again that $x_1|\log(x_1)|^{1-\gamma}=x_0$, we have

$$
G'(x_1)< \frac{2}{x_1}\frac{\log(|\log(x_1|\log(x_1)|^{1-\gamma})|)}{|\log(x_1)
|} < \frac{1}{x_1}\frac{1}{|\log(x_1)|^{\gamma}} =L'_{(G(x_1),0)}(x).
$$

Thus, Property 2 is verified, and we are done. 

\end{itemize}

\end{proof}

\begin{corollary}\label{ScaledConst}

   There exist  a family of  continuous monotone concave functions $G(x)$ on $[0,1]$ such that  given  $\lambda >0$ we have

    $$
\limsup_{x\rightarrow 0} \frac{G(\lambda x \,G(x) \, |\log(x)|)}{G(x)}=\infty.
$$  
Quantitatively, we can find a sequence $x_n \rightarrow 0$ such that 
 $$
\frac{G(\lambda x_n \,G(x_n) | \log(x_n)| )}{G(x_n)}\geq    \frac{1}{2}\log(|\log(x_n)|)
$$

\end{corollary}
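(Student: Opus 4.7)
My plan is to rerun the iterative construction of Proposition \ref{Const} essentially verbatim, modifying only the recursive definition of $x_{n+1}$ at odd steps in order to absorb the extra factor $\lambda$. Concretely, for even $n$ I would still define $x_n$ as the intersection of the line $L_{(G(x_{n-2}),G(x_{n-1}))}$ with $f(x)=|\log x|^{-\gamma}\log|\log x|$ and set $G(x_n)=f(x_n)$, but at odd indices $n+1$ I would replace the relation $x_{n+1}|\log x_{n+1}|^{1-\gamma}=x_n$ with its $\lambda$-twisted version
\[
\lambda\, x_{n+1}\,|\log x_{n+1}|^{1-\gamma}=x_n, \qquad G(x_{n+1})=|\log x_{n+1}|^{-\gamma}.
\]
The initial point $x_0$ will be taken small, now depending on $\lambda$, and $G$ is extended to $[0,1]$ by linear interpolation between consecutive $x_n$'s, exactly as before.

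With this single change, by construction $\lambda x_{n+1}G(x_{n+1})|\log x_{n+1}|=\lambda x_{n+1}|\log x_{n+1}|^{1-\gamma}=x_n$, so the ratio in the statement of the corollary telescopes cleanly:
\[
\frac{G(\lambda\, x_{n+1}\,G(x_{n+1})\,|\log x_{n+1}|)}{G(x_{n+1})}=\frac{G(x_n)}{G(x_{n+1})}=\frac{|\log x_n|^{-\gamma}\log|\log x_n|}{|\log x_{n+1}|^{-\gamma}}.
\]
Because $|\log x_n|$ and $|\log x_{n+1}|$ differ only by an additive $O(\log|\log x_{n+1}|+|\log\lambda|)$ term, exactly the same estimate used in Proposition \ref{Const} yields the lower bound $\frac{1}{2}\log|\log x_n|$ for every odd $n+1$, and sending $n\to\infty$ delivers both the quantitative claim and $\limsup=\infty$.

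The only real work, and the step I expect to be most tedious, is the re-verification of Properties 1 and 2 of Proposition \ref{Const} in the presence of $\lambda$. The $\lambda$-twisted choice of $x_1$ multiplies the relevant slope comparisons by a factor of $\lambda^{-1}$, so Property 1 reduces to an inequality of the schematic form $\lambda^{-1}\log|\log x_0|\cdot|\log x_1|^{2\gamma-1}/|\log x_0|^\gamma<1$, and Property 2 to an analogous bound; both are dominated by the decay $\log|\log x_0|/|\log x_0|^\gamma\to 0$ that is already present in the original proof, so they hold once $x_0$ is chosen small enough depending on $\lambda$ and $\gamma\in(0,1)$. Since $\lambda$ enters only as a multiplicative nuisance constant against quantities that vanish, no new idea is required: the logarithmic slack built into Proposition \ref{Const} comfortably absorbs any fixed $\lambda>0$.
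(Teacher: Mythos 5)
Your proposal is correct and matches the paper's own proof essentially verbatim: the paper likewise sets $x_0$ small enough depending on $\lambda$, replaces the recursion at odd steps by $\lambda x_{n+1}|\log x_{n+1}|^{1-\gamma}=x_n$, and otherwise repeats the construction of Proposition~\ref{Const}. Your added remark that the re-verification of Properties~1 and~2 goes through because $\lambda$ enters only as a fixed multiplicative constant against quantities that vanish as $x_0\to 0$ is the same observation implicit in the paper's choice $x_0/\lambda\ll 1$.
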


\begin{proof} This corollary follows from the previous proposition, Proposition \ref{Const}. We briefly show how to modify the previous construction in order for this corollary to follow. Take $0<\gamma<1$, and since \(\lambda > 0\) is fixed, we choose \(x_0\) small enough such that \(\frac{x_0}{\lambda} \ll 1\), and then we proceed as follows:

$$
G(x_0)=\frac{\log(|\log(x_0)|)}{|\log(x_0)|^{\gamma}}.
$$

Next, we choose $x_1$ such that $\lambda x_1|\log(x_1)|^{1-\gamma}=x_0$, and define

$$
G(x_1)=\frac{1}{|\log(x_1)|^{\gamma}}.
$$

As in the previous construction, we define \(x_2\) to be the intersection of the line connecting \(G(x_0)\) and \(G(x_1)\), \(L_{(G(x_0), G(x_1))}\), with the function \(f(x) = \frac{\log(|\log(x)|)}{|\log(x)|^{\gamma}}\), and define $$G(x_2)=\frac{\log(|\log(x_2)|)}{|\log(x_2)|^{\gamma}}.$$

Next, we define \(x_3\) such that \(\lambda x_3|\log(x_3)|^{1-\gamma} = x_2\), then we just repeat the procedure as in the previous proposition. The only difference is that for an even \(n\), \(x_{n+1}\) is defined as \(\lambda x_{n+1}|\log(x_{n+1})|^{1-\gamma} = x_{n}\). Then we see that

    $$
\limsup_{x\rightarrow 0} \frac{G(\lambda x \,G(x) |\log(x)
|)}{G(x)}=\infty.
$$

\end{proof}

\section{Loss of modulus of continuity}\label{Loss}
In this section, we prove our main result, showing that there exists a family of moduli of continuity for the 2D Euler equations that is not propagated.

We will use the family of  functions constructed in Section \ref{ConsInit} for the radial part of the initial data. Next, as mentioned in the sketch of proof in the introduction (Subsection \ref{sketch}), we follow the general strategy of Elgindi-Jeong \cite{EJ1}, and Jeong \cite{J} more specifically. To control the Euler dynamics given an initial particle, we first use Yudovich estimates (Lemma \ref{YudEstFlow}) to control the radial trajectory of the flow map and to approximately preserve the $\sin(2\theta)$  symmetry for a short time. This allows us to apply Lemma \ref{EJLemma} to obtain the leading order dynamics for the radial trajectory of the flow map. Using the structure of the initial data, we then show that the modulus of continuity is not propagated.

\begin{theorem}
There exist a family of solutions to the 2D Euler equations for which the modulus of continuity is lost. Namely there exist solutions to 2D Euler equations with initial  modulus of continuity  $\Omega_0(\rho)$ such that for $t>0$, we have

$$\frac{\Omega(t,\rho)}{\Omega_0(\rho)} \geq \frac{1}{2}\log(|\log(\rho)| )  $$

for $\rho>0$ arbitrarily small. 

\end{theorem}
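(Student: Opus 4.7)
The plan is to take initial vorticity of the form
\[
\omega_0(r,\theta) = G(r)\sin(2\theta)\chi(r),
\]
where $G$ is one of the functions produced by Corollary \ref{ScaledConst} and $\chi$ is a smooth compactly supported radial cutoff. A short computation, using concavity and monotonicity of $G$, shows that the modulus of continuity of $\omega_0$ is controlled by $G$: $\Omega_0(\rho)\lesssim G(\rho)$ for $\rho$ small. The $\sin(2\theta)$ symmetry is preserved by the 2D Euler dynamics through Biot--Savart, so the origin is a fixed point of the flow map, and Lemma \ref{EJLemma} applied to this $\omega_0$ gives $I^c\equiv 0$ and $I^s(r)=\pi\int_r^\infty G(s)\chi(s)/s\,ds$. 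For our $G$, $I^s(r)\sim G(r)|\log r|$ as $r\to 0^+$, so up to an $O(r\|\omega_0\|_\infty)$ correction the velocity near the origin takes the hyperbolic form $u(x,y)\approx \tfrac{I^s(r)}{2\pi}(-x,y)$, with the $x$-axis as stable manifold.

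Following the strategy of Elgindi-Jeong \cite{EJ1} and Jeong \cite{J}, I would then track specific Lagrangian trajectories driven by this hyperbolic field. For a particle initially at radius $x_n$ with initial angular coordinate chosen in the contracting cone of the stagnation point, the radial trajectory satisfies
\[
\dot r \approx -\frac{r I^s(r)}{2\pi}\cos(2\theta(t)),
\]
while the subleading error in Lemma \ref{EJLemma} is negligible because $I^s(r)\to\infty$ as $r\to 0$. I would use Yudovich's Lemma \ref{YudEstFlow} to bound the angular drift $\theta(t)$ on the short time interval of interest, keeping $\cos(2\theta(t))$ bounded below. Integrating the ODE and invoking Corollary \ref{ScaledConst} with $\lambda$ absorbing the $(1-\gamma)^{-1}$, $\pi$ constants that enter through $I^s$, there is an explicit time $t_n>0$ at which the realized contraction factor is exactly $x_n/x_{n+1}\sim |\log x_{n+1}|^{1-\gamma}$, so that the particle arrives at radius $x_{n+1}$ at time $t_n$ while carrying (by transport of vorticity) its initial value of order $G(x_n)$.

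Comparing this transported vorticity with the value at a reference low-vorticity point — for instance the origin itself, where by the $\sin(2\theta)$-symmetry $\omega(t,0)\equiv 0$ for all $t$ — yields a pair of points at distance $\lesssim x_{n+1}$ at time $t_n$ whose vorticity differs by at least a fixed fraction of $G(x_n)$. Consequently
\[
\Omega(t_n,x_{n+1})\gtrsim G(x_n),\qquad \Omega_0(x_{n+1})\lesssim G(x_{n+1}),
\]
and by Corollary \ref{ScaledConst}
\[
\frac{\Omega(t_n,x_{n+1})}{\Omega_0(x_{n+1})}\;\gtrsim\;\frac{G(x_n)}{G(x_{n+1})}\;\geq\;\tfrac{1}{2}\log|\log x_{n+1}|,
\]
which is the desired modulus loss along the sequence $x_{n+1}\to 0$.

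The hard part will be upgrading the heuristic hyperbolic ODE into a rigorous trajectory bound over $[0,t_n]$. Specifically, one must (i) quantitatively control the angular drift so that $\cos(2\theta(t))$ stays bounded below and the transported particle still carries a vorticity value comparable to $G(x_n)$ at time $t_n$, and (ii) keep the $O(r\|\omega_0\|_\infty)$ correction from Lemma \ref{EJLemma} negligible compared with the leading $rI^s(r)/(2\pi)$ term, which is automatic once $r$ is small enough. Both points are handled by combining the sharp Biot--Savart decomposition of Lemma \ref{EJLemma} with Yudovich's Lemma \ref{YudEstFlow} applied on the short time interval, following the template of Jeong \cite{J}.
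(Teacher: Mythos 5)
Your proposal follows the paper's proof essentially step for step: the radial profile from Corollary \ref{ScaledConst}, the Biot--Savart decomposition of Lemma \ref{EJLemma} giving the hyperbolic leading term with $I^s(r)\gtrsim G(r)|\log r|$, Yudovich's estimates to control the angular drift and the error term, and comparison with a zero-vorticity point on an axis of symmetry (the paper uses $(\rho,0)$, you use the origin) to read off the modulus loss via Corollary \ref{ScaledConst}. The one small deviation is that you take the angular part of the data to be $\sin(2\theta)$ itself, whereas the paper uses a bump function $h(\theta)\equiv 1$ on $[\delta,\pi/2-\delta]$ extended by $\sin(2\theta)$-symmetry; the paper's choice is a genuine convenience because it makes the transported vorticity value equal $g(\Phi_r^{-1})$ exactly and gives the lower bound on $I^s$ without any angular degradation to track, but this is a presentational rather than a substantive difference.
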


\begin{remark}
The modulus of continuity of our initial data is, by construction, bounded from above by $\bar{\Omega}(\rho)= |\log(\rho)|^{-\gamma} \log(|\log(\rho)|)$,   
which is propagated, see Section \ref{PropEx}.
\end{remark}

\begin{proof}

We consider initial data of the following form:
\[
\omega_0(r,\theta) =  g(r) h(\theta),
\]
where \( h(\theta) \) is a smooth bump function with \( h(\theta) = 1 \) on \([\delta, \frac{\pi}{2}-\delta]\) with \(\delta \ll 1\),  and then extended on \([0, 2\pi]\) according to \(\sin(2\theta)\) symmetry.  \( g(r) \) is the continuous monotone concave function on \([0, 1 ]\), \( G(r) \), constructed in Proposition \ref{Const}, and then compactly extended on \((1, \infty)\). Thus by construction, we can always choose \(\rho\) arbitrarily small such that we have
\begin{equation}\label{InitialModulus}
\begin{split}
\Omega_0(\rho) = g(\rho) = \frac{1}{|\log(\rho)|^{\gamma}}
\end{split}
\end{equation}

The goal is to show that for small enough \( r_0 \) and \( \theta_0 \), we have the following estimate for the radial velocity of the flow map:
\[ 
\frac{d}{dt} \Phi^{-1}_{r}(t) \approx \Phi^{-1}_{r}(t) g(r_0) \, |\log(r_0)|.
\]

Now, before we start tracking the radial trajectory \( \Phi_{r}(t) \), by using Lemma \ref{EJLemma}, we need to use the following bounds in order to control the dynamics on the desired time interval \([0, \tau]\). Because the initial data is odd in both Cartesian variables, the origin is fixed for all time. In addition, we have
\begin{equation}\label{OddSymmetry}
\begin{split}
\omega(t,x_1,0)=\omega(t,0,x_2)=\omega(t,0,0)=0, \quad  \text{for all} \quad t \geq 0.
\end{split}
\end{equation}
Thus, using estimates \eqref{YudEst}, then for small enough time $\tau$, we have

\begin{equation}\label{radialYudEst}
\begin{split}
r_0^{1+ct}  \leq  \Phi_{r}(t)\leq r_0^{1-ct},  \quad \text{and}  \quad r_0^{1+ct}  \leq  \Phi^{-1}_{r}(t)\leq r_0^{1-ct}, 
\end{split}
\end{equation}
for all \( 0 \leq t \leq \tau \). Next, we would like to obtain upper bounds on the angular trajectory \(\Phi_{\theta}(t)\), which we will use when we apply Lemma \ref{EJLemma}. Using the fact that the angular component of the velocity field vanishes on the axes, and given the odd symmetry of the initial data in both Cartesian coordinates, we have the following bound on the angular component of the velocity field by applying Yudovich estimates Lemma \ref{YudEst}:
 $$
u^{\theta}(\Phi(t))\leq C  \Phi_{r}(t) \Phi_{\theta}(t) \, |\log( \Phi_{r}(t)  \Phi_{\theta}(t))|,
 $$
Thus, we have the following upper bounds on the angular trajectory (see Section 3.2 in the work of Jeong \cite{J} for more details):
\[ 
\frac{d}{dt} \Phi_{\theta}(t) \leq C \Phi_{\theta}(t) | \log( \Phi_{r}(t) \Phi_{\theta}(t))| 
\]
Thus for time small enough, on  $[0,\tau]$, we have 
$$
\frac{d}{dt}\Phi_{\theta}(t) \leq   C\Phi_{\theta}(t) \, |\log(r_0^{1+CT}  \Phi_{\theta}(t)) |,
$$
and hence, whenever $\theta_0=r_0^c$ for some $c>0$, we have 
\begin{equation}\label{angularYudEst}
\begin{split}
\Phi_{\theta}(t)  \leq   \theta_0^{1-Ct} .
\end{split}
\end{equation}
Thus, by taking \( \tau \) and \( \theta_0 \) small enough,   we are guaranteed that \( \Phi_{\theta}(t) \) stays small (say \( \Phi_{\theta}(t) < \frac{\pi}{16} \)).

Now given \eqref{radialYudEst} and \eqref{angularYudEst}, we  will start applying Lemma \ref{EJLemma} to obtain the desired  lower bounds on radial trajectory $\Phi^{-1}_{r}$. Recall from Lemma \ref{EJLemma}, we have 

   $$
\Big|u(r,\theta)-u(0)-\frac{1}{2\pi} \begin{pmatrix}
-\cos(\theta)\\
\sin(\theta)
\end{pmatrix} r I^s(r) -\frac{1}{2\pi} \begin{pmatrix}
\sin(\theta)\\
\cos(\theta)
\end{pmatrix} r I^c(r) \Big| \leq C  r |\omega|_{L^\infty},
 $$
where 
$$
I^s(r)=\int_r^{\infty} \int_0^{2\pi} \sin(2\theta)\frac{ \omega(s,\theta)}{s}  \, d \theta \, ds.
$$

Now since the initial we are considering is odd in both Cartesian variables, and it is propagates for all time. We have  $u(0)=0$ and $I^c(r)=0$. Thus we can write the velocity field as follows:

 $$
u(r,\theta)=\frac{1}{2\pi} \begin{pmatrix}
-\cos(\theta)\\
\sin(\theta)
\end{pmatrix} r I^s(r)  + R(r,\theta),  $$
 
 such that, by Lemma \ref{EJLemma}, we have 
\begin{equation}\label{RLipschits}
\begin{split}
  |R(r,\theta)| \leq  r |\omega|_{L^\infty}.
\end{split}
\end{equation}

Hence, we can write the flow of the radial trajectory as follows:    
$$
\frac{d} {dt}  \Phi_{r}(t)=-  \cos(2 \Phi_{\theta}(t)))  \Phi_{r}(t) I^s(\Phi_{r}(t))+  R_r(\Phi_{r},\Phi_{\theta})
$$

Thus, by \eqref{RLipschits} and  \eqref{angularYudEst}, we have

$$
\frac{d} {dt}  \Phi_{r}(t)\leq -  c \,   \Phi_{r}(t) I^s(\Phi_{r}(t)).
$$

As mentioned before, since the initial data is odd symmetric and it is propagated for all time, we have
$$
I^s(\Phi_{r}(t))=\int_{\Phi_{r}(t)}^{\infty} \int_0^{2\pi} \sin(2\theta)\frac{ \omega(t,s,\theta)}{s}  \, d \theta \, ds=4\int_{\Phi_{r}(t)}^{\infty} \int_0^{\frac{\pi}{2}} \sin(2\theta)\frac{ \omega(t,s,\theta)}{s}  \, d \theta \, ds.
$$

Thus, we obtain 
$$
I^s(\Phi_{r}(t)) \geq c\int_{\Phi_{r}(t)}^{1} \int_{\Phi_{\theta}(t)}^{\frac{\pi}{4}} \sin(2\theta)\frac{ \omega(t,s,\theta)}{s}  \, d \theta \, ds. 
$$

Now since the vorticity is being transported, by definition we have  
\begin{equation}\label{VorticityDefinition}
\begin{split}
\omega(t,s,\theta)= \omega_0(\Phi^{-1}_r(t,s), \Phi^{-1}_{\theta}(t,\theta))=  g(\Phi^{-1}_{r}(t,s)) h(\Phi^{-1}_{\theta}(t,\theta)).
\end{split}
\end{equation}
Hence, we obtain 
$$
I^s(\Phi_{r}(t)) \geq c\int_{\Phi_{r}(t)}^{1} \int_{\Phi_{\theta}(t)}^{\frac{\pi}{4}} \sin(2\theta)\frac{  g(\Phi^{-1}_{r}(t,s)) h(\Phi^{-1}_{\theta}(t,\theta))}{s}  \, d \theta \, ds.  
$$

Now on $[0,1]$, by construction,  $g(r)$ is monotonically increasing. Thus on $[\Phi_{r}(t),1]$  we have $g(\Phi^{-1}_{r}(t,s)) \geq g(\Phi^{-1}_{r}(t,\Phi_{r}(t)))= g(r_0)  $ for all $s \in [\Phi_{r}(t),1]$. Hence, 

$$
I^s(\Phi_{r}(t)) \geq c g(r_0) \int_{\Phi_{r}(t)}^{1} \int_{\Phi_{\theta}(t)}^{\frac{\pi}{4}} \frac{ \sin(2\theta) h(\Phi^{-1}_{\theta}(t,\theta))}{s}  \, d \theta \, ds.    
$$

  In addition,   since we chose $h(\theta)$ to be a smooth bump function with $h(\theta)=1$ on $[\delta,\frac{\pi}{2}-\delta]$,  for $\delta \ll1$,  using estimates  \eqref{angularYudEst}, we are guaranteed  that $\Phi_{\theta}(t)  \leq \frac{\pi}{16}$ on $[0,\tau]$. Hence, the angular integral stays uniformly bounded.     Thus, we have

$$
I^s(\Phi_{r}(t)) \geq c g(r_0) |\log( \Phi_{r}(t))) |. 
$$

  Hence, on our time interval, we have

$$
\frac{d}{dt}\Phi_{r}(t) \leq   - c \Phi_{r}(t) g(r_0)  |\log( \Phi_{r}(t))) |. 
$$
Then,  using \eqref{radialYudEst}, we obtain  
$$
\frac{d}{dt}\Phi_{r}(t) \leq   - c \Phi_{r}(t) g(r_0)  |\log(r_0)|.   
$$
Hence,
\begin{equation}\label{FlowAngle}
\begin{split}
\Phi_{r}(t) \leq     r_0 e^{-cg(r_0)   \, |\log(r_0)| t}.  
\end{split}
\end{equation}
From \eqref{FlowAngle}, and because of the monotonicity of the initial data, we can estimate the trajectory of the inverse of the flow map as follows:
\begin{equation}\label{InverFlowAngle}
\begin{split}
\Phi^{-1}_{r}(t) \geq     r_0 e^{cg(r_0) \, |\log(r_0)| t}    \geq r_0(1+cg(r_0)  |\log(r_0)| t).
\end{split}
\end{equation}
Thus, from  estimate \eqref{InverFlowAngle},  we can obtain a lower bound on the vorticity:
\begin{equation*}
\begin{split}
\omega(t,r_0,\theta_0)=g(\Phi^{-1}_r(t))h\big(\Phi^{-1}_{\theta}(t)) \geq g(c   r_0 g(r_0) \,  |\log(r_0)|  t ) \, h\big(\Phi^{-1}_{\theta}(t)), 
\end{split}
\end{equation*}
and recall by the choice of initial data $h(\theta)$, using \eqref{angularYudEst}, we can choose $\theta_0$ such that we have $$h(\Phi^{-1}_{\theta}(t))=1 \quad \text{for}  \quad 0\leq t\leq \tau.$$ 
Thus,  we have 
\begin{equation}\label{VorticityLowerBnd}
\begin{split}
\omega(t,r_0,\theta_0) \geq g(c   r_0 g(r_0) \, |\log(r_0)|   t).
\end{split}
\end{equation}

Now with \eqref{VorticityLowerBnd}, we are ready to show the loss of modulus of continuity. 
Recall that the modulus of continuity is defined as follows:
$$
\Omega(t,\rho)=\sup_{|x-y|\leq \rho}|\omega(t,x)-\omega(t,y)|. 
$$
From \eqref{InitialModulus}, we have 
$$\Omega_0(\rho)=g(\rho)=\frac{1}{|\log(\rho)|^{\gamma }}.$$
The goal is to obtain a lower bound on $\Omega(t,\rho)$ using \eqref{VorticityLowerBnd}.  In polar coordinates, we take $x=(r_0,\theta_0)=(\rho,\rho^c)$, and  $y=(\rho,0)$, for $0<c<1$. As  before, because of odd symmetry of initial data \eqref{OddSymmetry}, we have  $\omega(t,y)=0$ for all $t$. Thus, with the choice  $x=(r_0,\theta_0)=(\rho,\rho^c)$, we have 
$$
\Omega(t,\rho)\geq \omega(t,r_0,\theta_0)=\omega(t,\rho,\rho^{c}).
$$

Hence,  from  \eqref{VorticityLowerBnd},  we obtain

$$
\frac{\Omega(t,\rho)}{\Omega_0(\rho)} \geq  \frac{g(c   \rho g(\rho)  \, |\log(\rho) |  t)}{g(\rho)}. 
$$

Now by Corollary \ref{ScaledConst}, we can choose the scaling parameter $\lambda$ such that $\lambda=c\tau$ and obtain:

$$
\frac{\Omega(t,\rho)}{\Omega_0(\rho)} \geq \frac{1}{2} \log(|\log(\rho)|),
$$
for $0<t\leq \tau$, and this gives the proof.   

\end{proof}

\section*{Acknowledgements}
I would like to thank my advisor Professor Tarek M. Elgindi for suggesting this problem, and I am grateful for all his important suggestions and feedback while working on this project. I also would like to thank Dr. Ayman R. Said for his detailed comments and important remarks on the paper, especially for his remarks on using Lemma 2.3.

\vskip 0.3 cm

Department of Mathematics, Duke University, Durham, NC 27708, USA

\textit{Email address:} karimridamoh.shikh.khalil@duke.edu


\begin{thebibliography}{99}
\bibitem{BKM} J.T. Beale, T. Kato , A. Majda: \textit{Remarks on the breakdown of smooth solutions for the 3-D Euler equations} Commun. Math. Phys. 94, 61-66, 1984.




\bibitem{BL1} J. Bourgain, D. Li, \textit{Strong ill-posedness of the incompressible Euler equation in borderline Sobolev spaces},  Inventiones Mathematicae 201(1), 2014, 97-157.

\bibitem{BL2} J. Bourgain, D. Li,  \textit{Strong illposedness of the incompressible Euler equation in integer $C^m$ spaces},  Geom. Funct. Anal. 25 (2015), no. 1, 1?86.

\bibitem{CJ} K.Choi and I-J.Jeong. \textit{Infinite growth in vorticity gradient of compactly supported planar vorticity
near Lamb dipole} Nonlinear Anal. Real World Appl. 65 (2022), Paper No. 103470, 20. MR
4350517


\bibitem{CCS} G. Ciampa, G. Crippa, S. Spirito \textit{	
Propagation of logarithmic regularity and inviscid limit for the 2D Euler equations}, arXiv:2402.07622

\bibitem{ChJ} D. Chae, I.-J. Jeong: \textit{Preservation of log-Holder coefficients of the vorticity in the transport equation} J. Diff. Equ. 343, 910-918 (2023).

\bibitem{D} S. Denisov. \textit{Infinite superlinear growth for the gradient for the two-dimensional Euler equation.} Contin. Dyn. Syst. A, 23(3):755-764, 2009.

\bibitem{D2} S. Denisov. \textit{Double exponential growth of the vorticity gradient for the two-dimensional Euler
equation} Proc. Amer. Math. Soc., 143:1199-1210, 2015.

\bibitem{DEL} T. D. Drivas, T. M Elgindi, and J. La. \textit{Propagation of singularities by
Osgood vector fields and for 2d inviscid incompressible fluids.} Mathematische Annalen, pages
1-28, 2022.



\bibitem{DE} T. D. Drivas and T. Elgindi. \textit{Singularity formation in the incompressible Euler equation in finite
and infinite time.} EMS Surveys in Mathematical Sciences, 10(1):1-100, 2023


\bibitem{DEJ} T. D. Drivas, T. M. Elgindi, I-J. Jeong \textit{Twisting in Hamiltonian Flows and Perfect Fluids}, 	arXiv:2305.09582



\bibitem{CMO} D. C\'ordoba, L. Mart\'inez-Zoroa, W. O\.za\'{n}ski. \textit{Instantaneous gap loss of Sobolev regularity for the 2D incompressible Euler equations}, to appear in Duke Math. J.


\bibitem{EbM} D. Ebin , J. Marsden. \textit{Groups of diffeomorphisms and the motion of an incompressible fluid}. Ann. Math. 92(2), 102-163 (1970)

\bibitem{TE} T. M. Elgindi . \textit{Finite-Time Singularity Formation for $C^{1,\alpha}$ Solutions to the Incompressible
Euler Equations on $R^{3}$},  Annals of Mathematics 194.3 (2021): 647-727.

\bibitem{E} T. M. Elgindi. \textit{Remarks on functions with bounded Laplacian.} arXiv:1605.05266, 2016

\bibitem{EJ1} T. M. Elgindi and I.-J. Jeong. \textit{Ill-posedness for the incompressible Euler equations in critical
Sobolev spaces} Ann. PDE 3.1 (2017), Paper No. 7, 19.

\bibitem{EJ2} T. M. Elgindi, and I-J.Jeong. \textit{On singular vortex patches, I: Well-posedness issues.} Memoirs of the American Mathematical Society 283 (1400)

\bibitem{EM} T. M. Elgindi, N. Masmoudi, \textit{$L^{\infty}$ ill-posedness for a class of equations arising in hydrodynamics,} Arch. Ration. Mech. 235(3) 1979-2025, (2020).

\bibitem{EMR} T. M. Elgindi, R. W. Murray, A. R. Said. \textit{Wellposedness and singularity formation beyond the Yudovich class}, arXiv:2312.17610 

\bibitem{G} N. Gunther: \textit{On the motion of fluid in a moving container} Izvestia Akad. Nauk USSR, Ser. Fiz.Mat., 20(1323-1348), 1927


\bibitem{H} E. H\"older., \textit{\"Uber die unbeschr\"ankte Fortsetzbarkeit einer stetigen ebenen Bewegung in einer unbegrenzten inkompressiblen Fl\"ssigkeit}. Math. Z. 37(1), 727-738, 1933

\bibitem{J} I.-J. Jeong, \textit{Loss of regularity for the 2D Euler equations} J. Math. Fluid Mech. 23.4 (2021), Paper No. 95, 11.

\bibitem{Ka} T.  Kato. \textit{Nonstationary flows of viscous and ideal fluids in $R^3$}. Journal of Functional Analysis 9 (1972), 296-305.

\bibitem{KS} A. Kiselev and V. \v{S}ver\'ak. \textit{Small scale creation for solutions of the incompressible two-dimensional Euler equation.} Ann. of Math. (2), 180(3):1205-1220, 2014.

\bibitem{K} H. Koch, \textit{Transport and instability for perfect fluids}, Math. Ann. 323 (2002), 491?523

\bibitem{L} J. La, \textit{Regularity and drift by Osgood vector fields}, arXiv:2206.14237



\bibitem{Li} L. Lichtenstein. \textit{\"Uber einige Existenzprobleme der Hydrodynamik homogener, unzusammendr\"uckbarer, reibungsloser Fl\"ussigkeiten und die Helmholtzschen Wirbels\"atze.} Math. Z.,
23(1):89-154, 1925



\bibitem{MB} A. J. Majda and A. L. Bertozzi, \textit{Vorticity and incompressible flow, Cambridge Texts in
Applied Mathematics}, vol. 27, Cambridge University Press, Cambridge, 2002. MR 1867882

\bibitem{MP} C. Marchioro and M. Pulvirenti. \textit{Mathematical theory of incompressible nonviscous fluids}
volume 96 of Applied Mathematical Sciences. Springer-Verlag, New York, 1994.

\bibitem{N}N. Nadirashvili, \textit{Wandering solutions of Euler's D-2 equation.} Functional Analysis and Its Applications, 25(3), 220-221. (1991).


\bibitem{W} W. Wolibner.: \textit{Un theor\'eme sur l\'existence du mouvement plan d\'un fluide parfait,
homog\`ene, incompressible, pendant un temps infiniment long}. Math. Z. 37(1), 698-
726, 1933


\bibitem{Y1} V. I. Yudovich. \textit{Non-stationary flows of an ideal incompressible fluid.} Z. Vycisl. Mat. i Mat. Fiz.,
3:1032-1066, 1963.


\bibitem{Y3} V. I. Yudovich. \textit{The loss of smoothness of the solutions of Euler equations with time}, (Russian)
Dinamika Splosn. Sredy Vyp. 16 Nestacionarnye Problemy Gidrodinamiki (1974), 71-78, 121.


\bibitem{Y2} V. I. Yudovich. \textit{Uniqueness theorem for the basic nonstationary problem in the dynamics of an ideal
incompressible fluid.} Math. Res. Lett., 2(1):27-38, 1995.

\bibitem{Y4} V. I. Yudovich. \textit{On the loss of smoothness of the solutions of the Euler equations and the inherent instability of flows of an ideal fluid}, Chaos: An Interdisciplinary Journal of Nonlinear Science 10.3 (2000): 705-719.


\bibitem{Z} A. Zlatos, \textit{Exponential growth of the vorticity gradient for the Euler equation on the torus.} Advances in Mathematics, 268, 396-403 (2015).


\end{thebibliography}
\end{document}